    \renewcommand{\leq}{\leqslant}
\theoremstyle{plain}
\newtheorem{thm}{Theorem}[section]
\newtheorem*{thm*}{Theorem}
\newtheorem{prop}[thm]{Proposition}
\newtheorem{cor}[thm]{Corollary}
\theoremstyle{remark}
\theoremstyle{definition}
\newtheorem{ex}[thm]{Example}
\DeclareMathOperator{\End}{\mathrm{End}}
\DeclareMathOperator{\N}{\mathbb{N}}
\DeclareMathOperator{\R}{\mathbb{R}}
\DeclareMathOperator{\var}{\vartriangleleft}
\title{\bf{Reflections and quotients in Coxeter groups}}
\author{}
\author{Paolo Sentinelli\thanks{ Dipartimento di Matematica, Politecnico di Milano, Milan, Italy. \\ \href{mailto:paolosentinelli@gmail.com}{paolosentinelli@gmail.com}}}
\date{}
\begin{document}

\maketitle

\vspace{-4em}

\begin{abstract}
We present a formula relating the set of left descents of an element of a Coxeter group with the sets of left descents of its projections on maximal quotients indexed by simple right descents. This formula is an instance of a general result involving quotients and descents.
\end{abstract}

$\,$

\section{Introduction}
\vspace{1em}
A reflection $t$ in a Coxeter system $(W,S)$ is a conjugate of a generator, and $t$ is a left descent of an element $w\in W$ if $\ell(tw)<\ell(w)$, where $\ell$ is the length function. Denoting by $T_L(w)$ the set of left descents of $w$, it is well known that the right weak order $\leq_R$ on $W$ is isomorphic to the set $\{T_L(w):w\in W\}$ ordered by inclusion.   In this note we prove the following formula:
\vspace{1em}
\begin{equation*}
 T_L(v)=T_L(u)\, \cup \bigcup_{s\in D_R(u^{-1}v)}T_L(v^{S\setminus\{s\}}),   
\end{equation*} 

\vspace{1em}
\noindent for all $u,v \in W$ such that $u\leq_R v$. Here $D_R(w)$ is the set of right simple descent (right descents of length one) of $w$, and $w^J$ is the element of minimal length in the coset $wW_J$, for any $J\subseteq S$. This formula is special case of a general result we prove in Theorem \ref{teorema riflessioni}. We then deduce some corollaries; for example, our result implies that the set $\{w^{S\setminus\{s\}}: s\in D_R(w)\}$ admits a join and it is precisely $w$ (Corollary \ref{cor join}), i.e.
\vspace{0.2em}
\begin{equation*}
    w=\bigvee\limits_{s\in D_R(w)}w^{S\setminus \{s\}},
\end{equation*} for all $w\in W$. We are regarding the poset $(W,\leq_R)$ as a complete meet-semilattice, for which the joins may not exist.

\vspace{1em}
We now fix notation and recall some definitions useful for the rest of the paper. We refer to the book of Bj\"{o}rner and Brenti \cite{BB} for Coxeter groups.

Let $(W,S)$ be a Coxeter system, i.e. a presentation of the group $W$ is given by a set $S$ of involutive generators and relations encoded by a \emph{Coxeter matrix}
$m:S\times S \rightarrow \{1,2,...,\infty\}$. A Coxeter matrix over $S$ is a symmetric matrix which
satisfies the following conditions for all $s,t\in S$:
\begin{enumerate}
  \item $m(s,t)=1$ if and only if $s=t$;
  \item $m(s,t)\in \{2,3,...,\infty\}$ if $s\neq t$.
\end{enumerate}  The presentation
$(W,S)$ of the group $W$ is then the following:
$$\left\{
  \begin{array}{ll}
    \mathrm{generators}: & \hbox{$S$;} \\
    \mathrm{relations}: & \hbox{$(st)^{m(s,t)}=e$,}
  \end{array}
\right.$$ for all $s,t\in S$, where $e$ denotes the identity in $W$. The Coxeter matrix $m$ attains the value $\infty$ at $(s,t)$ to indicate that there is no relation
between the generators $s$ and $t$.
The class of words expressing an element of $W$ contains words of minimal length; the \emph{length function} $\ell: W \rightarrow
\mathbb{N}$ assigns to an element $w\in W$ such minimal length. The identity $e$ is represented by the empty word and then $\ell(e)=0$. A \emph{reduced word} or \emph{reduced expression} for an element $w\in W$ is a word of minimal length representing $w$.  The set of \emph{reflections} of $(W,S)$ is defined by $T:=\{wsw^{-1}:w\in W, \, s\in S\}$. For $J\subseteq S$ and $v\in W$, we let
\begin{eqnarray*} W^J&:=&\Set{w\in W:\ell(w)<\ell(ws)~\forall~s\in J},
\\ D_R(v)&:=&\Set{s\in S:\ell(vs)<\ell(v)}.
\end{eqnarray*}
With $W_J$ we denote the subgroup of $W$ generated by $J\subseteq S$; in particular, $W_S=W$ and $W_\varnothing =
\Set{e}$. 
We let $\leqslant_R$  and $\leqslant$ be the \emph{right weak order}  and the \emph{Bruhat order}  on $W$, respectively. The covering relations of the right weak order are characterized as follows: 
$u \var_R v$ if and only if $\ell(u)<\ell(v)$ and $u^{-1}v \in S$. 
For $w\in W$, define $$T_L(w):=\{t\in T: \ell(tw)<\ell(w)\},$$
$$T_R(w):=\{t\in T: \ell(wt)<\ell(w)\}.$$ The right weak order has the following property (see \cite[Proposition 3.1.3]{BB}):
$$u \leq_R v \, \, \Leftrightarrow \, \, T_L(u)\subseteq T_L(v).$$
The covering relations of the Bruhat order are characterized as follows:
$u \var v$ if and only if $\ell(u)=\ell(v)-1$ and $u^{-1}v \in T$.
The posets $(W,\leqslant_R)$ and $(W,\leqslant)$ are graded with rank function $\ell$ and $(W,\leqslant_R) \hookrightarrow (W,\leqslant)$. 
For $J\subseteq S$, each element $w\in W$ factorizes uniquely as
$w=w^Jw_J$, where $w^J\in W^J$, $w_J\in W_J$ and $\ell(w)=\ell(w_J)+\ell(w^J)$; see~\cite[Proposition~2.4.4]{BB}. We consider the idempotent function $P^J:W
\rightarrow W$ defined by
\begin{equation*} P^J(w)=w^J,
\end{equation*} for all $w\in W$. This function is order preserving for the Bruhat order
(see \cite[Proposition~2.5.1]{BB}). Moreover $I\subseteq J \subseteq S$ implies $P^J\circ P^I = P^J$ (see \cite[Lemma 5.4]{deodhar}). We let $$P^{(s)}:=P^{S\setminus \{s\}},$$ for all $s\in S$.
\noindent For the following proposition see  \cite[Corollary 2.6.2]{BB}. \vspace{1em}
\begin{prop} \label{deodhar}
    Let $u,v\in W$. Then 
    $$u\leq v \, \, \Leftrightarrow \, \, P^{(s)}(u) \leq P^{(s)}(v) \, \,\forall \, s \in D_R(u).$$
\end{prop}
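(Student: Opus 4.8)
The forward implication is immediate: since each $P^{(s)}$ is order preserving for the Bruhat order, $u\le v$ gives $P^{(s)}(u)\le u\le v$ and hence $P^{(s)}(u)=P^{(s)}(P^{(s)}(u))\le P^{(s)}(v)$ for every $s\in S$, a fortiori for every $s\in D_R(u)$. The content of the statement is the converse, which I would prove by induction on $\ell(v)$. The base case $v=e$ forces $u=e$: if some $s\in D_R(u)$ existed then $u\notin W_{S\setminus\{s\}}$, so $P^{(s)}(u)=u^{S\setminus\{s\}}\neq e$, contradicting $P^{(s)}(u)\le P^{(s)}(e)=e$.

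For the inductive step I would first record a coset-stability remark: if $t\neq s$ then $s\in W_{S\setminus\{t\}}$, so $wsW_{S\setminus\{t\}}=wW_{S\setminus\{t\}}$ and therefore $P^{(t)}(ws)=P^{(t)}(w)$ for every $w\in W$. Now fix $s\in D_R(v)$ and split according to whether $s\in D_R(u)$. Suppose first $s\notin D_R(u)$. Then every $t\in D_R(u)$ satisfies $t\neq s$, whence $P^{(t)}(vs)=P^{(t)}(v)\ge P^{(t)}(u)$; thus the hypothesis holds for the pair $(u,vs)$, and since $\ell(vs)<\ell(v)$ the inductive hypothesis yields $u\le vs<v$. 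This case is clean and uses only order preservation and coset stability.

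Suppose instead $s\in D_R(u)\cap D_R(v)$. Here I would invoke the lifting property of the Bruhat order (see \cite{BB}), by which $u\le v$ is equivalent to $us\le vs$, and then apply the inductive hypothesis to $(us,vs)$, legitimate as $\ell(vs)<\ell(v)$. Since $(us)s=u>us$ we have $s\notin D_R(us)$, so every $t\in D_R(us)$ satisfies $t\neq s$; by coset stability the conditions required of $(us,vs)$ collapse to the inequalities $P^{(t)}(u)\le P^{(t)}(v)$ for $t\in D_R(us)$.

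This last point is exactly where I expect the real difficulty. The set $D_R(us)$ need not be contained in $D_R(u)$: right multiplication by the descent $s$ can create a new descent (for instance, in $\mathfrak S_4$ one has $u=s_1s_2s_3s_2s_1$ with $D_R(u)=\{s_1,s_3\}$, while $D_R(us_1)=\{s_2,s_3\}$). Consequently, for $t\in D_R(us)\setminus D_R(u)$ the assumption does not directly furnish $P^{(t)}(u)\le P^{(t)}(v)$, and this is the crux. To produce the missing inequalities I would localise to the rank-two parabolic $W_{\{s,t\}}$: the conditions $s\in D_R(u)$ and $t\in D_R(us)\setminus D_R(u)$ force the $\{s,t\}$-component of $u$ to be a nontrivial alternating word ending in $s$, and I would use the factorization $w=w^{S\setminus\{s,t\}}w_{S\setminus\{s,t\}}$ together with the order preservation of the projections to transfer the hypothesis from the descents already present in $D_R(u)$ to the newly created one. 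Carrying out this rank-two transfer — equivalently, verifying that the family of projection inequalities is stable under the reduction $(u,v)\mapsto(us,vs)$ — is the technical heart of the argument; once it is secured, the induction closes and the stated equivalence follows.
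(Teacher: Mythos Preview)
The paper does not give its own proof of this proposition: it is simply recorded with a reference to \cite[Corollary~2.6.2]{BB}. So there is no in-paper argument to compare against; the question is whether your sketch stands on its own.

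It does not, and you have correctly located the obstruction. In Case~2 you reduce $u\le v$ to $us\le vs$ via lifting and then want to invoke the inductive hypothesis on $(us,vs)$; for that you need $P^{(t)}(u)\le P^{(t)}(v)$ for every $t\in D_R(us)\setminus\{s\}$, whereas the assumption only hands you these inequalities for $t\in D_R(u)$. Your example $u=s_1s_2s_3s_2s_1$ in $\mathfrak S_4$ already shows $D_R(us_1)=\{s_2,s_3\}\not\subseteq D_R(u)=\{s_1,s_3\}$, so the missing inequality $P^{(s_2)}(u)\le P^{(s_2)}(v)$ is genuinely absent from the hypotheses. The proposed ``rank-two transfer'' is not an argument but a hope: localising to $W_{\{s,t\}}$ pins down the dihedral shape of $u_J$ (an alternating word of length $\ge 2$ ending in $s$, as you say), but it tells you nothing about $v$, and it is a constraint on $P^{(t)}(v)$ that you need. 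The three projections $P^{(s)},P^{(r)},P^{(t)}$ land in different maximal quotients with no direct comparison maps among them, so there is no evident way to manufacture the inequality for the new descent $t$ from those for the old descents. As written the induction does not close, and I do not see a local repair along these lines; the standard proofs (Deodhar, or \cite{BB}) proceed differently, via the subword characterisation of Bruhat order rather than by this descent-tracking induction.
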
 

\vspace{1em}
Now we recall some results about the standard geometric representation of a finitely generated Coxeter group.
Let $(W,S)$ be a Coxeter system with $|S|<\infty$ and Coxeter matrix $m$. Let $V$ be the free $\R$-vector space on the set $\{\alpha_s:s \in S\}$ and $\sigma_s \in \End(V)$ defined by $\sigma_s(v)=v-2(\alpha_s|v)\alpha_s$, where the bilinear form $(\cdot|\cdot)$ is defined by setting $(\alpha_s|\alpha_t):=-\cos\left(\frac{\pi}{m(s,t)}\right)$, for all $s,t\in S$; then the assignment $w \mapsto \sigma_w:=\sigma_{s_1}\cdots \sigma_{s_k}$, where $s_1\cdots s_k$ is any reduced word of $w\in W$,  provides a faithful representation
$W \rightarrow \mathrm{GL}(V)$ (see, e.g. \cite[Ch.~4]{BB}). Then we let $w(v):=\sigma_w(v)$, for all $v\in V$, $w\in W$.
Let $\Phi:=\{w(\alpha_s): w\in W, \, s \in S\}$,
$V^+:=\left\{\sum_{s\in S}a_s\alpha_s: \, a_s \geqslant 0 \, \forall \, s\in S\right\} \setminus \{0\}$ and $V^-:=\left\{\sum_{s\in S}a_s\alpha_s: \, a_s \leqslant 0 \, \forall \, s\in S\right\} \setminus \{0\}$; then it is well known that
$\Phi=\Phi^+ \uplus \Phi^-=\{\alpha_t : t \in T\} \uplus \{-\alpha_t : t \in T\}$, $\Phi^+ \subseteq V^+$ and $\Phi^- \subseteq V^-$.
It holds that $t(v)=v-2(\alpha_t|v)\alpha_t$ and $w(\alpha_t) \in \Phi^-$ if and only if $wt<w$, for 
all $w\in W$, $t\in T$ (see, e.g. \cite[Sec.~4.4]{BB}). 
Moreover, if $s\in S$ and $t\in T\setminus \{s\}$, then
$s(\alpha_t)=\alpha_{sts}$.

\section{Reflections and quotients}\label{sezione Coxeter}
\vspace{1em}
In this section we prove the main result of this paper, i.e. the formula mentioned in the introduction, as a corollary of the following theorem.

\begin{thm} \label{teorema riflessioni} Let $(W,S)$ be a Coxeter system and $u,v\in W$ such that   
$u\leq_R v$. If $E\subseteq \mathcal{P}(S)$ is such that $\bigcap_{J\in E}J=S\setminus D_R(u^{-1}v)$, then 
    $$T_L(v)=T_L(u)\, \cup \bigcup_{J\in E}T_L(v^J).$$
\end{thm}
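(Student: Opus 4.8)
The plan is to prove the two inclusions separately, with $\supseteq$ immediate and $\subseteq$ carrying all the content. For $\supseteq$: since $u\leq_R v$ we have $T_L(u)\subseteq T_L(v)$ by the weak-order characterization recalled in the introduction, and since $v=v^Jv_J$ with $\ell(v)=\ell(v^J)+\ell(v_J)$, a reduced word of $v^J$ is a prefix of a reduced word of $v$, whence $v^J\leq_R v$ and $T_L(v^J)\subseteq T_L(v)$ for every $J\in E$.

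For the harder inclusion I would pass to the geometric representation, using that $t\in T_L(v)$ iff $v^{-1}(\alpha_t)\in\Phi^-$. Write $\Phi_J:=\{\beta\in\Phi:\supp(\beta)\subseteq J\}$ for the root subsystem spanned by $\{\alpha_s:s\in J\}$. The key lemma I would establish is the following characterization of the descents surviving a projection:
\[ t\in T_L(v^J)\iff v^{-1}(\alpha_t)\in\Phi^-\setminus\Phi_J, \]
valid for every $t\in T_L(v)$. To prove it I would factor $v^{-1}=v_J^{-1}(v^J)^{-1}$ and use two standard facts about the parabolic decomposition: that $v_J\in W_J$ permutes $\Phi^+\setminus\Phi_J$ (so the sign of a root outside $\Phi_J$ is governed by $(v^J)^{-1}$ alone), and that $v^J\in W^J$ satisfies $v^J(\Phi_J^{\pm})\subseteq\Phi^{\pm}$ (which, since $\alpha_t\in\Phi^+$, forbids $(v^J)^{-1}(\alpha_t)$ from lying in $\Phi_J^-$). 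Combining these, for $t\in T_L(v)$ one checks that $v^{-1}(\alpha_t)\in\Phi_J$ precisely when $t\notin T_L(v^J)$.

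Granting this lemma, I would argue by contradiction. Fix $t\in T_L(v)\setminus T_L(u)$ and suppose $t\notin T_L(v^J)$ for every $J\in E$. Then $v^{-1}(\alpha_t)\in\Phi_J$ for all $J\in E$, and since membership in $\Phi_J$ is controlled by supports, $v^{-1}(\alpha_t)\in\bigcap_{J\in E}\Phi_J=\Phi_{\bigcap_{J\in E}J}=\Phi_K$, where $K:=S\setminus D_R(u^{-1}v)$; as $t\in T_L(v)$ this root is negative, so $v^{-1}(\alpha_t)\in\Phi_K^-$. Now set $w:=u^{-1}v$ and $\delta:=u^{-1}(\alpha_t)$; because $t\notin T_L(u)$ we have $\delta\in\Phi^+$, while $v^{-1}(\alpha_t)=w^{-1}(\delta)\in\Phi_K^-$. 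The defining property $D_R(w)\cap K=\varnothing$ means exactly $w\in W^K$, hence $w(\Phi_K^-)\subseteq\Phi^-$; applying $w$ gives $\delta=w(w^{-1}(\delta))\in\Phi^-$, contradicting $\delta\in\Phi^+$. Therefore some $J\in E$ satisfies $t\in T_L(v^J)$, which proves $\subseteq$.

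The main obstacle is the geometric lemma of the second paragraph: the equivalence is delicate because one must simultaneously track the sign of a root and whether it lies in the parabolic subsystem $\Phi_J$, and it is exactly the property $v^J(\Phi_J^-)\subseteq\Phi^-$ of minimal coset representatives that rules out the degenerate case where $(v^J)^{-1}(\alpha_t)\in\Phi_J^-$. Once that characterization is in place, the reduction of the hypothesis on $E$ to the support identity $\bigcap_{J\in E}\Phi_J=\Phi_{S\setminus D_R(u^{-1}v)}$ and the one-line contradiction via $u^{-1}v\in W^{S\setminus D_R(u^{-1}v)}$ are straightforward.
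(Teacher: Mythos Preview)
Your proof is correct. It differs from the paper's in framework: you work in the geometric representation and phrase the key lemma as ``for $t\in T_L(v)$, $t\in T_L(v^J)$ iff $v^{-1}(\alpha_t)\notin\Phi_J$'', whereas the paper proves the equivalent group-theoretic statement $T_L(v^J)=T_L(v)\setminus vW_Jv^{-1}$ directly from the identity $T_L(xy)=T_L(x)\uplus xT_L(y)x^{-1}$ (valid for length-additive products) together with $T_R(v^J)\cap W_J=\varnothing$. The paper then finishes by a direct set-algebraic computation: conjugating by $u$ yields $uT_L(w^J)u^{-1}\subseteq T_L(v^J)$ for $w:=u^{-1}v$, and taking the union over $J\in E$ collapses $\bigcap_{J\in E} wW_Jw^{-1}$ to $wW_Kw^{-1}$, so that $\bigcup_{J\in E} uT_L(w^J)u^{-1}=uT_L(w^K)u^{-1}=uT_L(w)u^{-1}$ because $w^K=w$. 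You instead argue by contradiction, following a single root and invoking $w\in W^K\Rightarrow w(\Phi_K^-)\subseteq\Phi^-$. Both arguments ultimately rest on the same fact that $w\in W^{S\setminus D_R(w)}$, but yours is organized around root positivity while the paper's is a purely combinatorial set computation; the paper's version has the minor advantage of avoiding the geometric representation entirely.
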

\begin{proof}  It is known that 
\begin{equation} \label{eq0}
    T_L(xy)=T_L(x)+xT_L(y)x^{-1},
\end{equation}
for all $x,y \in W$, where $+$ is the symmetric difference. 
Hence, for any $v\in W$, we have that $\varnothing = T_L(v^{-1}v) =T_L(v^{-1})+v^{-1}T_L(v)v$; this implies $T_R(v)=T_L(v^{-1})=v^{-1}T_L(v)v$, for all $v\in W$. Consider $J\subseteq S$. Then 
\begin{equation} \label{eq1}
    (v^J)^{-1}T_L(v^J)v^J \cap W_J = T_R(v^J)\cap W_J= \varnothing.
\end{equation} Notice that, if $\ell(xy)=\ell(x)+\ell(y)$, the symmetric difference in \eqref{eq0} is a disjoint union.
Therefore we have that
\begin{equation} \label{eq2}
    T_L(v) = T_L(v^J) \uplus v^JT_L(v_J)(v^J)^{-1} ,
\end{equation}
for all $v\in W$, $J\subseteq S$. Equalities \eqref{eq1} and \eqref{eq2} imply 
\begin{eqnarray} \label{eq3} \begin{split}
    T_L(v^J)&=T_L(v)\setminus v^JT_L(v_J)(v^J)^{-1} \\
    &= T_L(v)\setminus v^JW_J(v^J)^{-1} = T_L(v)\setminus vW_Jv^{-1}, \end{split}
\end{eqnarray}
If $u\leq_R v$ and $w:=u^{-1}v$, we have that
$T_L(v)=T_L(u)\uplus uT_L(w)u^{-1}$.
Moreover, by \eqref{eq3},
\begin{eqnarray} \label{eq5} \begin{split}
 uT_L(w^J)u^{-1} &= u(T_L(w)\setminus wW_Jw^{-1})u^{-1} \\
 &= uT_L(w)u^{-1}\setminus vW_Jv^{-1} \\ & \subseteq T_L(v ) \setminus vW_Jv^{-1} 
 = T_L(v^J). \end{split}
\end{eqnarray}
Therefore, by \eqref{eq3} and \eqref{eq5}, if we let $K:=\bigcap_{J\in E}J$, we obtain
\begin{eqnarray*}
    \bigcup\limits_{J\in E} T_L(v^J) & \supseteq & \bigcup\limits_{J\in E} uT_L(w^J)u^{-1}\\
    &=& u\left(\bigcup\limits_{J\in E} (T_L(w)\setminus wW_Jw^{-1})\right)u^{-1} \\
    &=& u\left(T_L(w)\setminus wW_Kw^{-1}\right)u^{-1} \\
    &=& u\left(T_L(w)\setminus wW_{S\setminus D_R(w)}w^{-1}\right)u^{-1} \\
    &=& uT_L(w^{S\setminus D_R(w)})u^{-1} = uT_L(w)u^{-1}.
\end{eqnarray*}
This implies that $$T_L(u)\, \cup \bigcup_{J\in E}T_L(v^J) \, \supseteq \, T_L(u) \cup uT_L(w)u^{-1} \, = \, T_L(v).$$
On the other hand, since $u\leq_R v$ and $v^J\leq_R v$ for all $J\in E$,   we also have that $T_L(v) \supseteq T_L(u)\, \cup \, \bigcup_{J\in E}T_L(v^J)$ and this concludes the proof.
\end{proof}
By the fact that $\bigcap_{s\in D_R(u^{-1}v)}(S\setminus \{s\})=S\setminus D_R(u^{-1}v)$, we obtain the following corollary, which is the finest instance of the statement of Theorem \ref{teorema riflessioni}.
\begin{cor} \label{corollario teorema riflessioni} Let $(W,S)$ be a Coxeter system and $u,v\in W$ such that   
$u\leq_R v$. Then 
    $$T_L(v)=T_L(u)\, \cup \bigcup_{s\in D_R(u^{-1}v)}T_L(v^{S\setminus\{s\}}).$$
\end{cor}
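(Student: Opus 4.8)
The plan is to obtain this corollary as an immediate specialization of Theorem \ref{teorema riflessioni}. The theorem permits any family $E\subseteq\mathcal{P}(S)$ whose members intersect to $S\setminus D_R(u^{-1}v)$, so I would exhibit one such family built from the simple right descents of $u^{-1}v$. Concretely, I would set $E := \{S\setminus\{s\} : s\in D_R(u^{-1}v)\}$, the family of maximal proper subsets of $S$ obtained by deleting one simple right descent at a time.

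First I would verify the hypothesis $\bigcap_{J\in E}J = S\setminus D_R(u^{-1}v)$. This is an elementary set identity: an element $x\in S$ belongs to every $S\setminus\{s\}$ with $s\in D_R(u^{-1}v)$ if and only if $x\neq s$ for all such $s$, that is, if and only if $x\notin D_R(u^{-1}v)$; this gives exactly $S\setminus D_R(u^{-1}v)$. Having checked this, I would invoke Theorem \ref{teorema riflessioni} with this $E$ to obtain $T_L(v)=T_L(u)\cup\bigcup_{J\in E}T_L(v^J)$, and then re-index the union over $E$ in terms of the descents $s$, writing $v^{S\setminus\{s\}}$ in place of $v^J$, which produces the claimed formula.

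Because the whole argument reduces to choosing the family $E$ and verifying a trivial intersection, I do not expect any genuine obstacle; the substantive content lives entirely in the theorem. The only point worth flagging---already recorded in the sentence preceding the corollary---is that this choice of singleton deletions yields the \emph{finest} admissible $E$, so the corollary is the sharpest instance of the theorem rather than a separate statement.
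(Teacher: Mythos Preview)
Your proposal is correct and matches the paper's approach exactly: the paper derives the corollary by observing that $\bigcap_{s\in D_R(u^{-1}v)}(S\setminus\{s\})=S\setminus D_R(u^{-1}v)$ and invoking Theorem~\ref{teorema riflessioni} with $E=\{S\setminus\{s\}:s\in D_R(u^{-1}v)\}$.
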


In the following example we consider a Coxeter system of type $B_4$. The set of generators is $S=\{s_0,s_1,s_2,s_3\}$; the Coxeter matrix takes the values $m(s_0,s_1)=4$ and $m(s_1,s_2)=m(s_2,s_3)=3$.
\begin{ex}
In type $B_4$, let $u=s_2s_3s_2$ and $v=s_2s_3s_2s_1s_0s_2s_3$.
Then $u^{-1}v=s_1s_0s_2s_3$, $D_R(v)=\{s_0,s_2,s_3\}$ and $D_R(u^{-1}v)=\{s_0,s_3\}$. We have that
\begin{itemize}
    \item $T_L(u)=\{s_2,s_3,s_2s_3s_2\}$, 
    \item $T_L(v^{S\setminus \{s_3\}})=T_L(s_1s_2s_3)=\{s_1,s_1s_2s_1, s_1s_2s_3s_2s_1\}$,
    \item $T_L(v^{S\setminus \{s_0\}})=T_L(s_3s_2s_1s_0)=\{s_3,s_2s_3s_2,s_1s_2s_3s_2s_1,s_3s_2s_1s_0s_1s_2s_3\}$,
    \item $T_L(v)=\{s_1,s_2,s_3,s_1s_2s_1, s_2s_3s_2,  s_1s_2s_3s_2s_1, s_3s_2s_1s_0s_1s_2s_3\}$.
\end{itemize}
\end{ex}

\vspace{1em}
An element $w\in W$ is uniquely determined by its descent set; hence, by Theorem \ref{teorema riflessioni} for $u=e$, we have that $w$ is uniquely determined by the set $\{w^{S\setminus \{s\}}: s\in D_R(w)\}$. This fact can be also deduced from Proposition \ref{deodhar}. An additional information provided by Theorem \ref{teorema riflessioni} is the following one, which concerns the join operation in the right weak order. 

\vspace{1em}
\begin{cor} \label{cor join}
    Let $w\in W$. Then $w= \bigvee\limits_{s\in D_R(w)}w^{S\setminus \{s\}}$.
\end{cor}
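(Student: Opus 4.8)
The plan is to combine the order-theoretic characterization of the right weak order, namely $u\leq_R v \iff T_L(u)\subseteq T_L(v)$, with Corollary \ref{corollario teorema riflessioni}. Since the excerpt stresses that $(W,\leq_R)$ is only a complete meet-semilattice, in which joins need not exist, the statement must be read as asserting that the join of the family $\{w^{S\setminus\{s\}}:s\in D_R(w)\}$ \emph{does} exist and equals $w$. Concretely, I would establish $w=\bigvee_{s\in D_R(w)}w^{S\setminus\{s\}}$ by exhibiting $w$ as the least upper bound of this family: first that $w$ is an upper bound, and then that every upper bound dominates $w$ in $\leq_R$.

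First I would check that $w$ is an upper bound. Taking $J=S\setminus\{s\}$ in \eqref{eq2} gives the disjoint decomposition $T_L(w)=T_L\bigl(w^{S\setminus\{s\}}\bigr)\uplus w^{S\setminus\{s\}}T_L(w_{S\setminus\{s\}})\bigl(w^{S\setminus\{s\}}\bigr)^{-1}$, whence $T_L\bigl(w^{S\setminus\{s\}}\bigr)\subseteq T_L(w)$ and therefore $w^{S\setminus\{s\}}\leq_R w$ for every $s\in D_R(w)$.

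Then I would prove minimality among upper bounds. Applying Corollary \ref{corollario teorema riflessioni} with $u=e$, so that $u^{-1}v=w$, $T_L(u)=\varnothing$ and $D_R(u^{-1}v)=D_R(w)$, yields the identity
$$T_L(w)=\bigcup_{s\in D_R(w)}T_L\bigl(w^{S\setminus\{s\}}\bigr).$$
Now let $z\in W$ be any upper bound of the family, i.e. $w^{S\setminus\{s\}}\leq_R z$ for all $s\in D_R(w)$, which by the descent-set characterization means $T_L\bigl(w^{S\setminus\{s\}}\bigr)\subseteq T_L(z)$ for every such $s$. Taking the union over $s\in D_R(w)$ and invoking the displayed identity gives $T_L(w)\subseteq T_L(z)$, hence $w\leq_R z$. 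Thus $w$ is the least upper bound of $\{w^{S\setminus\{s\}}:s\in D_R(w)\}$, which is exactly the asserted equality.

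I do not expect a genuine obstacle here, as there is no real computation: the proof is an immediate consequence of Corollary \ref{corollario teorema riflessioni} together with the encoding of $\leq_R$ by left-descent sets. The only point requiring care is conceptual rather than technical, namely that because joins may fail to exist in this semilattice, the conclusion must simultaneously assert the \emph{existence} of the join and its value; the least-upper-bound argument above delivers both at once.
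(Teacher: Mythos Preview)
Your argument is correct. Both your proof and the paper's hinge on the identity $T_L(w)=\bigcup_{s\in D_R(w)}T_L(w^{S\setminus\{s\}})$ obtained from Corollary~\ref{corollario teorema riflessioni} with $u=e$, but the two proofs diverge in how they extract the join from it. The paper first observes that $w$ bounds the family, invokes \cite[Theorem~1.5]{dyer} to guarantee that a join $w'$ exists, and then appeals to \cite[Proposition~2.14]{hohlweg} to conclude $T_L(w')=T_L(w)$. You instead verify the least-upper-bound property directly from the characterization $u\leq_R v\iff T_L(u)\subseteq T_L(v)$: any upper bound $z$ has $T_L(z)\supseteq\bigcup_s T_L(w^{S\setminus\{s\}})=T_L(w)$, hence $w\leq_R z$. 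Your route is more self-contained, needing no external lattice-theoretic input beyond what the paper already recalls; the paper's route, on the other hand, situates the result within the general theory of joins and biclosed inversion sets.
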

\begin{proof}
Clearly $T_L(w^J) \subseteq T_L(w)$, for all $J\subseteq S$, or, equivalently, $w^J\leq_R w$, for all $J\subseteq S$. Hence the set $\{w^{S\setminus \{s\}}: s\in D_R(w)\}$ is bounded and then it admits a join $w'$ (see \cite[Theorem 1.5]{dyer}). Moreover, by \cite[Proposition 2.14]{hohlweg} we have that $T_L(w')=T_L(w)$ and this concludes the proof.
\end{proof} \vspace{1em}
By repeated use of Corollary \ref{corollario teorema riflessioni}, we obtain the following result.
\vspace{1em}
\begin{cor}
    Let $s_1s_2\cdots s_k$ be a reduced expression for $w\in W$. Then 
    \begin{equation*}
       T_L(w) = \bigcup\limits_{i=0}^{k-1}T_L((s_1\cdots s_{k-i})^{S\setminus \{s_{k-i}\}}).
    \end{equation*} 
\end{cor}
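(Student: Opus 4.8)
The plan is to apply Corollary \ref{corollario teorema riflessioni} to consecutive prefixes of the given reduced word and then telescope the resulting unions. First I would introduce the prefixes $w_j := s_1 s_2 \cdots s_j$ for $0 \le j \le k$, so that $w_0 = e$ and $w_k = w$. Since $s_1 \cdots s_k$ is reduced, every initial segment $s_1 \cdots s_j$ is reduced as well; hence $\ell(w_{j-1}) < \ell(w_j)$ and $w_{j-1}^{-1} w_j = s_j \in S$, which in particular gives $w_{j-1} \leq_R w_j$. This is exactly the hypothesis needed to feed each consecutive pair into the Corollary.

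Next I would identify the descent set that governs one step. For a single generator one has $D_R(s_j) = \{s_j\}$: indeed $\ell(s_j s_j) = 0 < 1$ shows $s_j \in D_R(s_j)$, while $\ell(s_j s) = 2 > 1$ for every $s \in S \setminus \{s_j\}$ shows nothing else belongs. Applying Corollary \ref{corollario teorema riflessioni} with $u = w_{j-1}$ and $v = w_j$, so that $u^{-1} v = s_j$, the union over $D_R(u^{-1}v)$ collapses to a single term and produces the one-step recurrence
$$T_L(w_j) = T_L(w_{j-1}) \cup T_L\bigl(w_j^{S \setminus \{s_j\}}\bigr).$$

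Finally I would iterate this recurrence from $j = k$ down to $j = 1$, which is a routine induction on $k$, accumulating the quotient terms and using the base value $T_L(w_0) = T_L(e) = \varnothing$ to discard the tail. This yields
$$T_L(w) = \bigcup_{j=1}^{k} T_L\bigl(w_j^{S \setminus \{s_j\}}\bigr) = \bigcup_{j=1}^{k} T_L\bigl((s_1 \cdots s_j)^{S \setminus \{s_j\}}\bigr),$$
and the substitution $i = k - j$ rewrites the index range $j = 1, \dots, k$ as $i = 0, \dots, k-1$, matching the claimed formula verbatim. The one point that demands attention—hardly a genuine obstacle—is the bookkeeping in the telescoping together with the verification that $D_R(s_j) = \{s_j\}$, which is precisely what guarantees that each application of the Corollary contributes exactly one new quotient $w_j^{S \setminus \{s_j\}}$ rather than a larger union.
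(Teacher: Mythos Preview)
Your proof is correct and follows exactly the approach the paper intends: the paper simply says the result is obtained ``by repeated use of Corollary \ref{corollario teorema riflessioni}'', and you have spelled out precisely that iteration, applying the corollary to each consecutive pair of prefixes $w_{j-1}\leq_R w_j$ and telescoping. There is nothing to add.
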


\vspace{1em}
We now deduce from Theorem \ref{teorema riflessioni} the statement of \cite[Lemma 4.2]{io2}. Such result has inspired our main formula and it is related to the shellability of Coxeter complexes.
\begin{cor} \label{corollario riflessioni}
    Let $u,v,w\in W$ be such that $u\leq_R v$
    and $u\leq_R w$. If $P^{(s)}(v)=P^{(s)}(w)$ for all $s\in D_R(u^{-1}v)$, then $v\leq_R w$.
\end{cor}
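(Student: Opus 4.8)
The plan is to translate the target $v \leq_R w$ into the inclusion of left-descent sets $T_L(v) \subseteq T_L(w)$, using the characterization $u \leq_R v \Leftrightarrow T_L(u) \subseteq T_L(v)$ recalled in the introduction, and then to expand $T_L(v)$ by means of Corollary \ref{corollario teorema riflessioni}. Applying that corollary to the pair $u \leq_R v$ yields
\[
T_L(v) = T_L(u) \cup \bigcup_{s \in D_R(u^{-1}v)} T_L(v^{S\setminus\{s\}}),
\]
so it will suffice to show that each piece on the right-hand side sits inside $T_L(w)$.

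For the first piece, the hypothesis $u \leq_R w$ gives immediately $T_L(u) \subseteq T_L(w)$. For the remaining pieces, I would rewrite $v^{S\setminus\{s\}} = P^{(s)}(v)$ and invoke the hypothesis $P^{(s)}(v) = P^{(s)}(w)$, which holds for precisely the indices $s \in D_R(u^{-1}v)$, to replace $v^{S\setminus\{s\}}$ by $w^{S\setminus\{s\}}$. Since $T_L(w^J) \subseteq T_L(w)$ for every $J \subseteq S$ (equivalently $w^J \leq_R w$, as recorded in the proof of Corollary \ref{cor join}), each such term satisfies $T_L(v^{S\setminus\{s\}}) = T_L(w^{S\setminus\{s\}}) \subseteq T_L(w)$. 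Taking the union then gives $T_L(v) \subseteq T_L(w)$, which is exactly $v \leq_R w$.

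I do not expect a genuine obstacle here: the argument is a direct substitution once Theorem \ref{teorema riflessioni} (in its sharpest form, Corollary \ref{corollario teorema riflessioni}) is available. The single point that must be checked carefully is the matching of index sets, since the hypothesis only supplies the equalities $P^{(s)}(v) = P^{(s)}(w)$ for $s \in D_R(u^{-1}v)$; this is, however, exactly the index set over which the union in Corollary \ref{corollario teorema riflessioni} ranges, so no further indices are required and the replacement is legitimate term by term.
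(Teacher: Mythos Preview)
Your proposal is correct and follows essentially the same argument as the paper: expand $T_L(v)$ via Corollary~\ref{corollario teorema riflessioni}, use $u\leq_R w$ for the first piece, and use the hypothesis $P^{(s)}(v)=P^{(s)}(w)$ together with $w^{S\setminus\{s\}}\leq_R w$ for the remaining terms. The paper's proof is just a more terse version of what you wrote.
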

\begin{proof} Since $u\leq_R w$, we have that $T_L(u)\subseteq T_L(w)$. Moreover $T_L(v^{S\setminus \{s\}})=T_L(w^{S\setminus \{s\}})\subseteq T_L(w)$ for all $s\in D_R(u^{-1}v)$. Hence by Corollary \ref{corollario teorema riflessioni} we obtain $T_L(v) \subseteq T_L(w)$, which is equivalent to $v\leq_R w$.
\end{proof}
\vspace{0.3em}
For $n\in \N$, let $B_n$ be the power set $\mathcal{P}(\{1,\ldots,n\})$ ordered by inclusion and, for $w\in W$ and $K\subseteq S\setminus D_R(w)$, 
$$\mathcal{T}^K_w:=\left\{T_L(w^{K\cup J}) : J \subseteq D_R(w)\right\} \subseteq \mathcal{P}(T),$$ 

\noindent ordered by inclusion.
\vspace{0.3em}
\begin{prop} \label{prop minimale}
Let $w\in W$, $K\subseteq S\setminus D_R(w)$ and $n:=|D_R(w)|$. Then, as posets, $\mathcal{T}^K_w \simeq B_n$.
\end{prop}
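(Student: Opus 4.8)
The plan is to exhibit an explicit order-reversing bijection between the Boolean lattice $\mathcal{P}(D_R(w))$ and $\mathcal{T}^K_w$, and then invoke the self-duality of Boolean lattices. Throughout I fix $w$ and $K$, and for $J\subseteq D_R(w)$ I abbreviate $I_J:=K\cup J\subseteq S$. The starting point is equation \eqref{eq3} from the proof of Theorem \ref{teorema riflessioni}, which gives $T_L(w^{I_J})=T_L(w)\setminus wW_{I_J}w^{-1}$. Setting $C_J:=wW_{I_J}w^{-1}\cap T_L(w)$, so that $C_J\subseteq T_L(w)$ and $T_L(w^{I_J})=T_L(w)\setminus C_J$, reduces the problem to understanding the family $\{C_J:J\subseteq D_R(w)\}$ ordered by inclusion. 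Indeed, complementation $X\mapsto T_L(w)\setminus X$ is an anti-automorphism of the Boolean lattice $\mathcal{P}(T_L(w))$, so $\mathcal{T}^K_w$ is anti-isomorphic to $\{C_J:J\subseteq D_R(w)\}$.

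It therefore suffices to prove that $J\mapsto C_J$ is an order-embedding of $\mathcal{P}(D_R(w))$ into $\mathcal{P}(T)$; its image is then isomorphic to $B_n$, and the anti-isomorphism above together with the self-duality $B_n\simeq B_n^{\mathrm{op}}$ finishes the proof. Monotonicity of $J\mapsto C_J$ is immediate, since $J_1\subseteq J_2$ gives $W_{I_{J_1}}\subseteq W_{I_{J_2}}$ and hence $C_{J_1}\subseteq C_{J_2}$.

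The heart of the matter, and the step I expect to be the main obstacle, is the reverse implication: that $J\mapsto C_J$ reflects inclusions, equivalently that $J$ can be recovered from $C_J$. Here I would use the identity $T_R(v)=v^{-1}T_L(v)v$ established in the proof of Theorem \ref{teorema riflessioni}: conjugating by $w$ turns $C_J$ into $w^{-1}C_Jw=T_R(w)\cap W_{I_J}$. The key observation is that this set of reflections remembers exactly which simple reflections of $I_J$ are right descents of $w$. Intersecting with $S$ and using $S\cap W_{I_J}=I_J$ together with $S\cap T_R(w)=D_R(w)$, I get $S\cap w^{-1}C_Jw=D_R(w)\cap(K\cup J)$, which equals $J$ precisely because $K\subseteq S\setminus D_R(w)$ forces $D_R(w)\cap K=\varnothing$. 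Thus $J=S\cap w^{-1}C_Jw$ is a function of $C_J$, so $C_{J_1}\subseteq C_{J_2}$ implies $J_1\subseteq J_2$ after intersecting with $S$, yielding the desired order-embedding.

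Assembling the pieces, $J\mapsto C_J$ is an order-isomorphism onto $\{C_J\}$, the restriction of complementation is an anti-isomorphism of $\{C_J\}$ onto $\mathcal{T}^K_w$, and the composite is an anti-isomorphism $\mathcal{P}(D_R(w))\to\mathcal{T}^K_w$; since $|D_R(w)|=n$ and Boolean lattices are self-dual, $\mathcal{T}^K_w\simeq B_n$ as claimed. A side benefit of this argument is that it simultaneously shows that the $2^n$ sets $T_L(w^{K\cup J})$ are pairwise distinct. I note that one could alternatively establish injectivity through the geometric representation, by verifying that every $s\in D_R(w)\setminus(K\cup J)$ remains a right descent of $w^{K\cup J}$ (a sign computation with $w^{I_J}(\alpha_s)\in\Phi^-$); but the reflection-conjugation argument above avoids the root-system calculation entirely.
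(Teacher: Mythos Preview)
Your argument is correct, and it takes a genuinely different route from the paper. Both proofs show that $J\mapsto T_L(w^{K\cup J})$ is an anti-isomorphism onto its image and then appeal to the self-duality of $B_n$; the divergence is in how the order-reflecting property is established. The paper works in the geometric representation (restricting to the finitely generated parabolic $W_H$ with $H=\{s\in S:s\leq w\}$) and, for each $s\in D_R(w)$, computes $(w^{K\cup J})^{-1}(\alpha_{t_s})=-w_{K\cup J}(\alpha_s)$ to conclude that $t_s:=wsw^{-1}$ lies in $T_L(w^{K\cup J})$ if and only if $s\notin J$. Your argument extracts the same membership criterion without roots: from $T_L(w^{K\cup J})=T_L(w)\setminus C_J$ and $w^{-1}C_Jw=T_R(w)\cap W_{K\cup J}$, the intersection with $S$ (using the standard fact $S\cap W_{K\cup J}=K\cup J$ and the hypothesis $K\cap D_R(w)=\varnothing$) recovers $J$ directly. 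Your approach is more elementary, avoids the passage to a finitely generated subgroup needed for the geometric representation, and makes the inclusion-reflecting step a one-line computation; the paper's root calculation, on the other hand, makes the distinguished reflections $t_s=wsw^{-1}$ explicit, which is what is used immediately afterwards to show the minimality of the indexing set in Corollary~\ref{corollario teorema riflessioni}.
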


\begin{proof} 
If $w=e$ the result is trivial. So let $w\neq e$ and $H:=\{s\in S: s\leq w\}$. We consider the standard geometric representation of the finitely generated Coxeter group $W_H$. Then $w\in W_H$ and
assume $T_L(w^{K\cup I}) = T_L(w^{K\cup J})$, for some $I,J\subseteq D_R(w)$.  Consider the factorization $w=w^{K \cup J}w_{K \cup J}$.
We have that $s \in J\subseteq D_R(w)$ if and only if $s\in D_R(w_{K \cup J})$, i.e. if and only if  $w_{K \cup J}(\alpha_s) \in \Phi^-$. Let $t_s:= wsw^{-1}$; then $\alpha_{t_s}=-w(\alpha_s)$. We have that 
$$        (w^{K \cup J})^{-1}(\alpha_{t_s})= -(w^{K \cup J})^{-1}w(\alpha_s) 
        =-w_{K \cup J}(\alpha_s).
    $$ Hence
    \begin{eqnarray*}
        s \in J &\Leftrightarrow & s\in D_R(w_{K \cup J}) \\
        &\Leftrightarrow & (w^{K \cup J})^{-1}(\alpha_{t_s}) \in \Phi^+ \\
        &\Leftrightarrow & t_s \not \in T_L(w^{K \cup J})\\
         &\Leftrightarrow & t_s \not \in T_L(w^{K \cup I}) \\
         &\Leftrightarrow & s\in D_R(w_{K \cup I}) \Leftrightarrow  s \in I.
    \end{eqnarray*}
Therefore $I=J$. We have proved that $|\mathcal{T}_w|=|B_n|=2^n$.
    This also proves that $T_L(w^{K \cup I}) \subseteq T_L(w^{K \cup J})$ implies $J \subseteq I$. On the other hand, if
 $I \subseteq J$, then $K \cup I \subseteq K \cup J$ and we have that $w^{K \cup J}=(P^{K \cup J}\circ P^{K \cup I})(w) \leq_R w^{K \cup I}$, i.e. $T_L(w^{K \cup J}) \subseteq T_L(w^{K \cup I})$.
Then the poset $\mathcal{T}_w$ is isomorphic to the dual of $B_n$ and the result follows since $B_n$ is self--dual.
\end{proof}
Proposition \ref{prop minimale} implies that the atoms of the poset $\mathcal{T}^{S\setminus D_R(w)}_w$ are the sets $T_L(w^{S\setminus \{s\}})$, with $s\in D_R(w)$. The proof of this proposition also shows that if $s\in D_R(w)$ then
$$wsw^{-1} \in T_L(w^{S\setminus \{s\}}) \setminus \bigcup\limits_{r\in D_R(w)\setminus \{s\}} T_L(w^{S\setminus \{r\}}),$$
i.e. the union in Corollary \ref{corollario riflessioni} is minimal in respect to its indexing set.

\section{Acknowledgements}

I thank the anonymous referee for having generalized the original main result (now Corollary \ref{corollario teorema riflessioni}) and proposing a more elegant proof.
I also thank Christophe Hohlweg for some useful conversations and for having pointed out the result of Corollary \ref{cor join}.

\end{document}